\newcommand{\subparagraph}{}
\newcommand{\bea}{\begin{eqnarray}}
\newcommand{\eea}{\end{eqnarray}}
\newcommand{\beas}{\begin{eqnarray*}}
\newcommand{\eeas}{\end{eqnarray*}}
\newcommand{\leftm}{\left[\begin{array}}
\newcommand{\rightm}{\end{array}\right]}
\def\tr{{\mbox{\bf trace}}}
\def\diag{{\mbox{\bf diag}}}
\newtheorem{thm}{Theorem}
\newtheorem{prop}[thm]{Proposition}
\newtheorem{Assumption}[thm]{Assumption}
\newtheorem{Lemma}[thm]{Lemma}
\title{\LARGE \bf
An Iterative Method for Nonconvex Quadratically Constrained
Quadratic Programs}
\author{Chuangchuang Sun and Ran~Dai
\thanks{Chuangchuang Sun and Ran Dai are with the Aerospace Engineering Department, Iowa State University, Ames, IA. Emails:
        {\tt\small ccsun@iastate.edu and dairan@iastate.edu}}%
}
\begin{document}
\maketitle
\thispagestyle{empty}
\pagestyle{empty}


\begin{abstract}
This paper examines the nonconvex quadratically constrained quadratic programming (QCQP) problems using an iterative method.
One of the existing approaches for solving nonconvex QCQP problems relaxes the rank one constraint on the unknown matrix into semidefinite constraint to obtain the bound on the optimal value without finding the exact solution. By reconsidering the rank one matrix, an iterative rank minimization (IRM) method is proposed to gradually approach the rank one constraint. Each iteration of IRM is formulated as a convex problem with semidefinite constraints. An augmented Lagrangian method, named extended Uzawa algorithm, is developed to solve the subproblem at each iteration of IRM for improved scalability and computational efficiency. Simulation examples are presented using the proposed method and comparative results obtained from the other methods are provided and discussed.
\end{abstract}
\begin{IEEEkeywords}
Quadratically Constrained Quadratic Programming; Semidefinite Programming; Nonconvex Optimization; Augmented Lagrangian Method
\end{IEEEkeywords}

\section{INTRODUCTION}
The general/noncovex quadratically constrained quadratic programming (QCQP) problem has recently attracted significant interests due to its wide applications. For example, any polynomial problems of optimizing a polynomial objective function while satisfying polynomial inequality constraints can be reformulated as general QCQP problems~\cite{burer2012representing,d2003relaxations}. In addition, we can find QCQP applications in the areas of maxcut problems~\cite{4099492}, production planning~\cite{rutenberg1971product}, signal processing~\cite{luo2010semidefinite}, sensor network localizations~\cite{biswas2006semidefinite}, and optimal power flow~\cite{1664986,5971792}, just to name a few.

Convexification and relaxation techniques have been commonly used when solving nonconvex optimization problems~\cite{Acikmese2011341,5738669,6190765}. Efforts toward solving nonconvex QCQP problems have been pursued in two directions, obtaining a bound on the optimal value and finding a feasible solution. For simplicity, the QCQPs discussed below represent general/nonconvex QCQPs. Extensive relaxation methods have been investigated to obtain a bound on the optimal value of a QCQP. The linear relaxation approach introduces extra variables to transform the quadratic objective and constraints into bilinear terms, which is followed by linearization of the bilinears~\cite{al1995relaxation,qualizza2012linear}. The final linear formulation reaches a bound on the QCQP optimal value with fast convergence, but low accuracy.
The semidefinite programming (SDP) relaxation introduces a rank one matrix to replace the quadratic objective and constraints with linear matrices equalities/inequalities. However, the nonlinear rank one constraint on the unknown matrix is substituted by semidefinite relaxation. In general, the SDP relaxation reaches a tighter bound on the optimal value than that obtained from linear relaxation~\cite{d2003relaxations}.
A detailed discussion of various relaxation approaches and the comparison of their relative accuracy is provided in~\cite{Bao2011}.

However, finding a bound on the optimal value of QCQP does not imply generating an optimal solution, not even a feasible one. One of the efforts for obtaining a feasible solution utilizes an iterative linearization approach to gradually improve the objective value~\cite{d2003relaxations}. However, this method does not provide any guarantee of convergence. Another approach is to generate randomized samples and solve the QCQP on average of the distribution. However, the randomization approach does not apply to problems with equality constraints and the optimality is not guaranteed. Branch and bound (BNB) method has been frequently utilized to search for the optimal solution of nonconvex problems~\cite{linderoth2005simplicial,androulakis1995alphabb,dai2012optimal}. Although BNB can lead to global optimal solution, the searching procedure is time consuming, especially for large scale optimization problems. Recent work in \cite{sojoudi2014exactness} proposes that the structure of the QCQP problems can be changed based on graph theory to obtain a low-rank solution which greatly reduces the gap between the exact solution and the relaxed one. Furthermore, works in \cite{lasserre2001global} generates a series of SDPs to solve polynomial optimization problems, which is applicable to small scale QCQPs.

After reviewing the literature, we come to a conclusion that a more efficient approach is required to solve QCQP problems. In our previous work of \cite{dai_acc2014,7172240}, an iterative rank minimization (IRM) method has been proposed to solve homogeneous QCQPs. Inspired by the SDP relaxation, the IRM method focuses on finding the unknown rank one matrix by gradually minimizing the rank of the unknown matrix. This paper explores the problem to inhomogeneous QCQPs and focuses on proof of convergence to local optimum with a determined linear convergence rate based on the duality theory and the Karush-Kuhn-Tucker conditions. Each iteration of IRM is formulated as a convex problem with semidefinite constraints. To improve the scalability and computational efficiency in solving large scale convex optimization problems with semidefinite constraints, an extended Uzawa algorithm, based on the augmented Lagrangian method~\cite{conn1991globally,fortin2000augmented}, is developed to solve the subproblem at each iteration of IRM. And convergence to global optimality for the extended Uzawa algorithm is provided.

To our knowledge, there has been no existing approach for solving nonconvex QCQP with guaranteed convergence and a determined convergence rate to a local optimum while also satisfying all nonconvex constraints. The special contribution of this paper is a novel iterative approach to solve nonconvex QCQPs and proof of the linear convergence of the iterative approach. Furthermore, the proposed approach is accomplished by solving each iteration via a scalable and computationally-efficient extended Uzawa algorithm.

In the following, the QCQP formulation is introduced in \S II. The IRM method is discussed in \S III with linear convergence rate and local optimality proof. In \S IV, the extended Uzawa algorithm for solving large scale SDPs is introduced and its convergence analysis is discussed. Simulation examples are presented in \S V to verify the effectiveness of the proposed methods. We conclude the paper with a few remarks in \S VI.


\section{Problem Formulation}
A general homogeneous QCQP problem can be expressed in the form of
\begin{eqnarray}
\label{eq:nc_qcqp}
     & J=\min{\mathbf{x}^TQ_0\mathbf{x}}\\
s.t. & \mathbf{x}^TQ_j\mathbf{x}\leq c_j,\; \forall\; j=1,\ldots,m\nonumber,
\end{eqnarray}
where $\mathbf{x}\in\mathbb{R}^n$ is the unknown vector to be determined, $Q_j\in\mathbb{S}^{n},\,j=0,\ldots,m$, is an arbitrary symmetric matrix, and $c_j\in\mathbb{R},\,j=1,\ldots,m$. 
Inhomogeneous QCQPs with linear terms can be reformulated as homogeneous ones by defining an extended vector $\tilde{\mathbf{x}}=[\mathbf{x}^T,t]^T$ as well as a new quadratic constraint $t^2 = 1$~\cite{luo2010semidefinite}.
Since $Q_j,\,j=0,\ldots,m,$ are not necessarily positive semidefinite, problem in (\ref{eq:nc_qcqp}) is generally classified as nonconvex and NP-hard, requiring global search for its optimal solution. Without loss of generality, the following approach to solve nonconvex QCQP problems focuses on homogeneous QCQPs.

\section{An Iterative Approach for Nonconvex QCQPs}
\subsection{The Lower Bound on the Optimal Value of QCQPs}
In order to solve the nonconvex QCQP in (\ref{eq:nc_qcqp}), the semidefinite relaxation method is firstly introduced to find a tight lower bound on the optimal objective value.
By applying interior point method, the relaxed formulation can be solved via a SDP solver~\cite{boyed1996}.
After introducing a rank one positive semidefinite matrix $X=\mathbf{x}\mathbf{x}^T$ and substituting the rank one constraint
%
by a positive semidefinite constraint, $X\succeq\mathbf{x}\mathbf{x}^T$, the relaxed formulation is written as
\begin{eqnarray}
\label{eq:nc_sdp}
     & J=\min_X{\langle X,Q_0\rangle}\\
s.t. & \langle X,Q_j \rangle\leq c_j,\; \forall\; j=1,\ldots,m,\; X\in\mathbb{S}_+^{n},\nonumber
\end{eqnarray}
where $\mathbb{S}_+$ denotes a positive semidefinite matrix and `$\langle\cdot\rangle$' denotes the inner product of two matrices, i.e., $\langle A,B \rangle = \tr(A^TB)$. The semidefinite constraint relaxes the original formulation in (\ref{eq:nc_qcqp}), which generally yields a tighter lower bound on the optimal value of (\ref{eq:nc_qcqp}) than the one obtained from linearization relaxation technique~\cite{Bao2011}.
Therefore, by reformulating the problem of (\ref{eq:nc_qcqp}) in the form of (\ref{eq:nc_sdp}), we obtain lower bound on the optimal value of (\ref{eq:nc_qcqp}). However, the relaxation method will not yield optimal solution of the unknown variables $\mathbf{x}$. Compared to the original formulation in (\ref{eq:nc_sdp}), the only difference of the relaxation approach is that the rank one constraint on matrix $X$ is excluded.
In order to obtain the optimal solution of $\mathbf{x}$, we reconsider the rank one constraint on matrix $X$ and propose an IRM approach to gradually reach the constraint.

\subsection{Iterative Rank Minimization Approach}\label{sec:irm}
Satisfying rank one constraint, $X=\mathbf{x}\mathbf{x}^T$,  for a unknown matrix $X$ is computationally complicated. The direct method is to examine eigenvalues of the matrix. When only one eigenvalue is nonzero, it can be claimed that the rank of the matrix is one.
However, for a unknown matrix $X\in \mathbb{S}^{n}$, there is no straight way to examine its eigenvalues before it is determined.
Although heuristic search methods have been used to minimize the rank of symmetric or asymmetric matrix, they cannot guarantee that the rank of the final matrix is one~\cite{fazel2003log,mesbahi1997rank}.

Based on the fact that when the rank of a matrix is one, it has only one nonzero eigenvalue. Therefore, instead of making constraint on the rank, the focus is on constraining the eigenvalues of $X$ such that the $n-1$ smallest eigenvalues of $X$ are all zero. The eigenvalue constraints on matrices have been used for graph design~\cite{shafi2011graph} and are applied here for rank minimization. Before addressing the detailed IRM approach, necessary observations that will be used subsequently in the approach are provided first.

\begin{prop}
For a nonzero positive semidefinite matrix, $X\in \mathbb{S}^{n}_{+}$, it is a rank one matrix if and only if $rI_{n-1}-V^TXV\succeq 0$, where $r=0$, $I_{n-1}$ is an identity matrix with dimension $n-1$, and $V\in \mathbb{R}^{n\times (n-1)}$ are the eigenvectors corresponding to the $n-1$ smallest eigenvalues of $X$.
\end{prop}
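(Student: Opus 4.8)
The plan is to exploit the spectral decomposition of $X$ and thereby reduce the matrix inequality to a simple statement about the $n-1$ smallest eigenvalues. Since $X\in\mathbb{S}^n_{+}$ is symmetric and positive semidefinite, the spectral theorem furnishes an orthonormal eigenbasis $v_1,\ldots,v_n$ with associated eigenvalues ordered as $\lambda_1\geq\cdots\geq\lambda_n\geq 0$. By hypothesis the columns of $V$ are the eigenvectors $v_2,\ldots,v_n$ belonging to the $n-1$ smallest eigenvalues, so $V$ has orthonormal columns and $XV=[\lambda_2 v_2,\ldots,\lambda_n v_n]$.

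First I would compute $V^TXV$ explicitly. Using $Xv_i=\lambda_i v_i$ together with the orthonormality relation $v_i^Tv_j=\delta_{ij}$, the $(i,j)$ entry of $V^TXV$ equals $\lambda_{j+1}\,v_{i+1}^Tv_{j+1}=\lambda_{j+1}\delta_{ij}$, hence $V^TXV=\diag(\lambda_2,\ldots,\lambda_n)$. Substituting $r=0$, the claimed condition $rI_{n-1}-V^TXV\succeq 0$ collapses to $-\diag(\lambda_2,\ldots,\lambda_n)\succeq 0$, i.e. $\lambda_i\leq 0$ for every $i=2,\ldots,n$.

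To establish rank one from the inequality, I combine this with positive semidefiniteness: since each $\lambda_i\geq 0$ and now each $\lambda_i\leq 0$ for $i\geq 2$, it follows that $\lambda_2=\cdots=\lambda_n=0$. Because $X$ is nonzero we must have $\lambda_1>0$, so $X$ has exactly one nonzero eigenvalue and therefore $\rk(X)=1$. For the reverse implication, if $X$ is rank one then its $n-1$ smallest eigenvalues vanish, whence $V^TXV=\diag(0,\ldots,0)=0$ and $rI_{n-1}-V^TXV=0\succeq 0$ holds trivially.

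There is no deep obstacle here; the argument is essentially the observation that setting $r=0$ forces the trailing eigenvalues to be simultaneously nonnegative (from $X\succeq 0$) and nonpositive (from the inequality), pinning them to zero. The one point requiring a little care is the reduction $V^TXV=\diag(\lambda_2,\ldots,\lambda_n)$, which relies on $V$ spanning an $X$-invariant subspace on which $X$ acts by the smallest eigenvalues; this remains valid even when eigenvalues are repeated, provided the columns of $V$ are chosen orthonormal, which the spectral theorem always permits.
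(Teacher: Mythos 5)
Your proof is correct and follows essentially the same route as the paper: both reduce the matrix inequality to the observation that $V^TXV=\diag$ of the $n-1$ smallest eigenvalues (the paper via the Rayleigh-quotient remark, you via an explicit entrywise computation with the orthonormal eigenbasis), and then pin those eigenvalues to zero by squeezing them between the inequality and $X\succeq 0$. Your version is in fact slightly more careful than the paper's, since you note explicitly that orthonormality of the columns of $V$ keeps the reduction valid under repeated eigenvalues and that $X\neq 0$ is what forces the leading eigenvalue to be strictly positive.
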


\begin{proof}
Assume the eigenvalues(nonnegative) of $X$ are sorted in descending orders in the form of $[\lambda_n,\lambda_{n-1},\ldots,\lambda_1]$.
Since the Rayleigh quotient of an eigenvector is its associated eigenvalue, then $rI_{n-1}\!-\!V^TXV\!\!$ is a diagonal matrix with diagonal elements set as $[r-\lambda_{n-1}, r-\lambda_{n-2},\ldots,r-\lambda_1]$.
Therefore the $n-1$ smallest eigenvalues of $X$ are all zero if and only if $rI_{n-1}-V^TXV\succeq 0$ and $r=0$. Then $X$ is a rank one matrix.
\end{proof}
%

From the above discussion, we will substitute the rank one constraint, $X=\mathbf{x}\mathbf{x}^T$, by the semidefinite constraint,
$rI_{n-1}-V^TXV\succeq 0$,
where $r=0$ and $V\in\mathbb{R}^{n\times (n-1)}$ are the eigenvectors corresponding to the $n-1$ smallest eigenvalues of $X$.
However, before we solve $X$, we cannot obtain the exact $V$ matrix, thus an iterative method is proposed to gradually minimize the rank of $X$. At each step $k$, we will solve the following semidefinite programming problem formulated as
\bea
     & J=\min_{X_k,r_k}{\langle X,Q_0 \rangle}+w^kr_k\label{eq:irm}\\
s.t. & \langle X,Q_j \rangle\leq c_j,\; \forall\; j=1,\ldots,m\nonumber\\
&r_kI_{n-1}-V_{k-1}^TX_kV_{k-1}\succeq 0,\;X_k\succeq 0\nonumber
\eea
where $w^{k}>1$ is a weighting factor for $r_k$ in $k$th iteration and $V_{k-1}$ are the eigenvectors corresponding to the $n-1$ smallest eigenvalues of
$X$ solved at previous iteration $k-1$.
At each step, we are trying to optimize the original objective function and at the same time minimize the newly introduced parameter $r$ such that when $r=0$, the rank one constraint on $X$ is satisfied. Meanwhile, since $X_k$ is constrained to be positive semidefinite, the term $V_{k-1}^TX_kV_{k-1}$ is positive semidefinite as well, which implies that the value of $r_k$ is nonnegative in order to satisfy $r_kI_{n-1}-V_{k-1}^TX_kV_{k-1}\succeq 0$ in (\ref{eq:irm}). The above approach is repeated until $r_k\leq\epsilon$, where $\epsilon$ is a small threshold for stopping criteria.
Once the rank one matrix $X$ is obtained, the optimal solution of $x$ is determined by $\mathbf{x}=\sqrt{\lambda_{n}}\mathbf{v}$,
where $\lambda_{n}\in\mathbb{R}$ is the largest eigenvalue of $X$ and $\mathbf{v}\in\mathbb{R}^n$ is the corresponding eigenvector.
The IRM algorithm is summarized below.

\vspace{0.5cm}
\framebox{\begin{minipage}[c][1\totalheight][t]{0.9\linewidth}%
\textbf{Algorithm: Iterative Rank Minimization}\\
\textbf{Input: }{Parameters $Q_0,\,Q_j,\, c_j,\,w$,\, $j=1,\ldots,m$}\\
\textbf{Output: }{Unknown rank one matrix $X$ and unknown state vector $\mathbf{x}$ }\\
\textbf{begin}{\par}
\begin{enumerate}
\item \textbf{initialize }{Set $k=0$, solve (\ref{eq:nc_sdp}) to find $X_0$ and obtain $V_0$ from $X_0$, set $k=k+1$}{\par}
\item \textbf{while }{$r_k>\epsilon$}{\par}
\item {Solve problem (\ref{eq:irm}) and obtain $X_k$, $r_k$}{\par}
\item {Update $V_k$ from $X_k$}{\par}
\item {$k=k+1$}{\par}
\item {\textbf{end while}}{\par}
\item {Find $\mathbf{x}$ from $X$}{\par}
\end{enumerate}
\textbf{end}%
\end{minipage}}
%
\\

Different from the nonlinear programming (NLP) method~\cite{bertsekas1999nonlinear} which can be applied to solve nonconvex QCQP problems, the IRM algorithm above does not require initial guess of the unknown variables. Furthermore, except the newly introduced variable $r_k$, there are no additionally introduced unknown variables in the formulation. This simple procedure can be easily implemented for any nonconvex QCQP problems. 
Further analysis of the convergence properties of IRM with a determined linear rate is discussed below.

\subsection{Convergence of IRM Algorithm}
%
\begin{prop}
$\lim_{k \to +\infty}r_k=0$ with linear rate in the IRM algorithm if the problem formulated in (\ref{eq:irm}) is feasible.
\end{prop}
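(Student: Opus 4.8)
The plan is to show that the scalar sequence $\{r_k\}$ is bounded below by $0$, is non-increasing, and contracts geometrically, so that it converges linearly to $0$. First I would record the elementary lower bound: since the subproblem (\ref{eq:irm}) enforces $X_k\succeq 0$, the compressed matrix $V_{k-1}^TX_kV_{k-1}$ is positive semidefinite, and the constraint $r_kI_{n-1}-V_{k-1}^TX_kV_{k-1}\succeq 0$ then forces $r_k\ge\lambda_{\max}(V_{k-1}^TX_kV_{k-1})\ge 0$. Thus $\{r_k\}$ is bounded below, and by the feasibility hypothesis every subproblem attains its optimum, so the sequence is well defined.

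The core of the argument is a comparison between two consecutive subproblems. Using the observation in Proposition~1, the matrix $V_k^TX_kV_k$ is diagonal and carries exactly the $n-1$ smallest eigenvalues of $X_k$; consequently the pair $(X_k,\lambda_{n-1})$, where $\lambda_{n-1}$ is the largest among those $n-1$ smallest eigenvalues of $X_k$ (the second largest eigenvalue of $X_k$), is feasible for the $(k+1)$st subproblem: it trivially satisfies the fixed constraints $\langle X_k,Q_j\rangle\le c_j$ and $X_k\succeq 0$, and it satisfies $\lambda_{n-1}I_{n-1}-V_k^TX_kV_k\succeq 0$ by construction. Optimality of $(X_{k+1},r_{k+1})$ then yields the key inequality
\begin{equation}
\langle X_{k+1},Q_0\rangle+w^{k+1}r_{k+1}\le \langle X_k,Q_0\rangle+w^{k+1}\lambda_{n-1}.
\end{equation}
I would next invoke Cauchy eigenvalue interlacing for the compression by the orthonormal columns of $V_{k-1}$, which gives $\lambda_{n-1}\le\lambda_{\max}(V_{k-1}^TX_kV_{k-1})\le r_k$, thereby linking the right-hand side back to the previous iterate.

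To extract the linear rate I would exploit the geometrically growing weight $w^k>1$ together with the boundedness of the objective term over the fixed feasible set $\{X\succeq 0:\langle X,Q_j\rangle\le c_j\}$: the quantity $\langle X_k,Q_0\rangle$ is bounded below by the relaxed optimum $J$ of (\ref{eq:nc_sdp}) and, taking that feasible set to be compact, bounded above as well. Rearranging the key inequality gives $r_{k+1}\le\lambda_{n-1}+w^{-(k+1)}\bigl(\langle X_k,Q_0\rangle-\langle X_{k+1},Q_0\rangle\bigr)$, so the penalty factor $w^{k+1}$ suppresses the bounded objective gap. I would then bring in the stationarity and complementary-slackness (KKT) conditions of (\ref{eq:irm})---in particular the relation $\tr(Z_1)=w^k$ obtained from stationarity in $r$, where $Z_1\succeq 0$ is the multiplier of the eigenvalue constraint---to convert the comparison into a genuine contraction $r_{k+1}\le\rho\,r_k$ with $\rho<1$ governed by $1/w$. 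Iterating this contraction delivers $r_k\le C\rho^k\to 0$.

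The main obstacle is precisely this last step: the interlacing relations only loosely couple $r_{k+1}$ to $r_k$ because the eigenvector basis changes from $V_{k-1}$ to $V_k$ between iterations, so producing a true geometric contraction rather than mere boundedness requires carefully controlling both the variation of $\langle X_k,Q_0\rangle$ and the drift of the bases. I expect the decisive leverage to come from the monotone growth of $w^k$ and the KKT stationarity identity, which together pin down the rate constant $\rho$. I would also dispose separately of the degenerate case in which the relaxation (\ref{eq:nc_sdp}) already returns a rank-one $X_0$, so that $r_k=0$ for every $k$ and the claim holds trivially.
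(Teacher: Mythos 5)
Your preparatory steps are all correct: $r_k\ge 0$ follows from the two semidefinite constraints; the pair $\bigl(X_k,\lambda_{n-1}(X_k)\bigr)$ is indeed feasible for the $(k+1)$st subproblem because $V_k^TX_kV_k$ is diagonal with the $n-1$ smallest eigenvalues of $X_k$ on its diagonal; and Cauchy interlacing does give $\lambda_{n-1}(X_k)\le\lambda_{\max}(V_{k-1}^TX_kV_{k-1})\le r_k$. But the chain you assemble from these facts is $r_{k+1}\le r_k+w^{-(k+1)}\bigl(\langle X_k,Q_0\rangle-\langle X_{k+1},Q_0\rangle\bigr)$, and this cannot prove the proposition: since the weight $w^{k+1}$ multiplies $\lambda_{n-1}(X_k)$ at full strength on both sides of your key inequality, no factor strictly less than one ever appears, and the inequality is perfectly consistent with $r_k\to r_\infty>0$ (a nonnegative sequence satisfying $r_{k+1}\le r_k+Cw^{-k}$ need not decay at all). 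You are candid that the contraction $r_{k+1}\le\rho r_k$ is the unresolved obstacle, but the proposed remedy --- invoking the stationarity identity $\tr(S_1)=w^k$ --- is a gesture rather than an argument: that identity merely normalizes the multiplier of the eigenvalue constraint and by itself says nothing about the ratio $r_{k+1}/r_k$. Since the entire content of the proposition is precisely $r_k\to 0$ at rate $1/w$, the proof is missing its decisive step.

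The paper closes the loop by a different mechanism that never compares primal feasible points across iterations. It forms the Lagrangian dual (\ref{eq:Dual problem}) of the subproblem, verifies Slater's condition so that strong duality holds at every $k$, and writes the exact identities $\langle X_k^*,Q_0\rangle+w^kr_k^*=g_k$ and $\langle X_{k+1}^*,Q_0\rangle+w^{k+1}r_{k+1}^*=g_{k+1}$, where $g_k$ is the dual optimal value. Subtracting and dividing by $w^k$ gives $wr_{k+1}^*-r_k^*=w^{-k}\bigl((g_{k+1}-g_k)-\langle X_{k+1}^*-X_k^*,Q_0\rangle\bigr)$, and since feasibility keeps the dual values and the iterates $X_k^*$ bounded, the right-hand side vanishes as $k\to\infty$ for any $w>1$; from $wr_{k+1}^*-r_k^*\to 0$ the paper then reads off the asymptotic ratio $\lim_{k\to\infty}|r_{k+1}^*-r^*|/|r_k^*-r^*|=1/w$, i.e., linear convergence with the rate governed by the weight growth. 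In short, the quantity your approach can only bound as an error term to be suppressed is pinned down exactly by strong duality in the paper's argument; that equality of primal and dual optimal values at consecutive iterations is the missing idea, and without it (or some substitute yielding a genuine contraction) your comparison-plus-interlacing route establishes boundedness of $\{r_k\}$ but not the claimed limit or rate.
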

\begin{proof}
The Lagrangian of (\ref{eq:irm}) is constructed as
$\mathcal{L} =\langle X_k,Q_0 \rangle + w^{k}r_k+\sum_{j=1}^m \lambda_j(\langle X_k,Q_j \rangle - c_j)  -\langle S_1, r_k\textbf{I}_{n-1} - V^{T}_{k-1}X_kV_{k-1}\rangle -\langle S_2, X_k\rangle $,
where $\lambda_j\in\mathbb{R}> 0$, $j=1,\ldots,m$, $S_1 \in \mathbb{S}_+^{n-1}$ and $S_2 \in \mathbb{S}_+^{n}$ are the Lagrangian dual multipliers.
The dual function is then expressed as
$g_k(\lambda,\mu, S_1,S_2) = \inf_{X_k,r_k}\mathcal{L}(X_k,r_k,\lambda,S_1,S_2)$.
Consequently, the dual problem is built as,
\bea\label{eq:Dual problem}
     & J = \min_{\lambda,S_1,S_2} -\sum_{j=1}^m \lambda_jc_j \\
s.t. & Q_0 + \sum_{j=1}^m \lambda_jQ_j + V_{k-1}S_1V_{k-1}^{T} - S_2 = \textbf{0} \nonumber\\
     & w^k - \tr(S_1) = 0  \nonumber\\
     & \lambda_j \ge 0,\, S_1 \succeq \textbf{0},\, S_2 \succeq \textbf{0},\,\forall\, j=1,\ldots,m. \nonumber
\eea
It is obvious that the problem described in (\ref{eq:irm}) is convex. Moreover, it can be verified that the Slater's conditions are satisfied, which leads to the conclusion that the strong duality holds. Thus, at the optimal solution points of iteration $k$ and $k+1$, denoted as $(X_k^*,r_k^*,\lambda_k^*,(S_1^*)_{k},(S_2^*)_{k})$ and $(X_{k+1}^*,r_{k+1}^*,\lambda_{k+1}^*,(S_1^*)_{k+1},(S_2^*)_{k+1})$, the objective value of the primal problem equals to that of the dual problem, then $\langle X_k^{*},Q_0 \rangle  + w^kr_k^* = - \sum_{j=1}^m (\lambda_j^*)_kc_i = g_k$ and $
    \langle X_{k+1}^{*},Q_0 \rangle + w^{k+1}r_{k+1}^* = -\sum_{j=1}^m (\lambda_j^*)_{k+1}c_j =g_{k+1}$.
Since the optimization problem in (\ref{eq:irm}) is assumed to be feasible, the dual problem in (\ref{eq:Dual problem}) is bounded and matrix $X_k$ is finite. Then in the above two equations, subtracting one from the other yields $\lim_{k \to +\infty}(wr_{k+1}^* - r_k^*)= \lim_{k \to +\infty} \frac{(g_{k+1}-g_k) - \langle X_{k+1}^{*}- X_k^{*},Q_0 \rangle}{w^k} = 0,\, \forall\, w>1$.
Since $r^* = 0$, $r_{k+1}^* > 0$, $r_{k}^*>0$, and $0 < \frac{1}{w} < 1$, then $\lim_{k \to +\infty}\frac{|r_{k+1}^* - r^*|} {|r_{k}^* - r^*|}= \frac{1}{w}$. The above equation indicates that $r_k^*$ converges to $r^*$ linearly and $w^kr_k^*$ is non-increasing.
\end{proof}

\begin{prop}
$X_k$ converges to a local optimal solution $X^*$ in the IRM method.
\end{prop}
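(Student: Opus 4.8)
The plan is to argue along a convergent subsequence and then pass the Karush–Kuhn–Tucker (KKT) system of the convex subproblem~(\ref{eq:irm}) to the limit. First I would establish that the iterates $\{X_k\}$ are bounded: each $X_k$ lies in the closed set $\{X:\langle X,Q_j\rangle\le c_j,\ X\succeq 0\}$, and by the same finiteness argument invoked in Proposition~2 (feasibility of~(\ref{eq:irm}) makes the dual bounded, so each $X_k$ is finite), the iterates stay in a compact set. By the Bolzano--Weierstrass theorem a subsequence converges, say $X_k\to X^*$; since $X_{k-1}$ shares this limit, and assuming the largest limiting eigenvalue of $X^*$ is simple and bounded away from the remaining $n-1$ eigenvalues, the associated spectral projector is continuous, so the eigenvector blocks obey $V_{k-1}\to V^*$, where the columns of $V^*$ span the eigenspace of the $n-1$ smallest eigenvalues of $X^*$.

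Next I would verify that $X^*$ is feasible for the original rank-one QCQP~(\ref{eq:nc_qcqp}). The inequalities $\langle X^*,Q_j\rangle\le c_j$ and $X^*\succeq 0$ persist in the limit since the constraint set is closed. For the rank-one property, Proposition~2 gives $r_k\to 0$, so passing the constraint $r_kI_{n-1}-V_{k-1}^TX_kV_{k-1}\succeq 0$ to the limit yields $-(V^*)^TX^*V^*\succeq 0$; combined with $X^*\succeq 0$, which forces $(V^*)^TX^*V^*\succeq 0$, we obtain $(V^*)^TX^*V^*=0$. Hence the $n-1$ smallest eigenvalues of $X^*$ vanish, and Proposition~1 certifies that $X^*$ is rank one, i.e. $X^*=\mathbf{x}^*(\mathbf{x}^*)^T$ is a feasible point of the original nonconvex problem.

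Finally, for local optimality I would take limits in the KKT conditions of~(\ref{eq:irm}). Along a further subsequence extract convergent multipliers $\lambda_k^*\to\lambda^*\ge 0$ (boundedness from a constraint qualification such as LICQ at $X^*$), and from the complementary slackness $\langle S_2,X_k\rangle=0$ with both matrices positive semidefinite deduce $S_2X_k=0$, hence in the limit $S_2^*\mathbf{x}^*=0$. The delicate point is the multiplier $S_1$: its trace equals $w^k\to\infty$, so $S_1$ itself does not converge. However, right-multiplying the stationarity identity $Q_0+\sum_{j=1}^m\lambda_jQ_j+V_{k-1}S_1V_{k-1}^T-S_2=0$ by $\mathbf{x}^*$ and using $(V^*)^T\mathbf{x}^*=0$ should annihilate the $S_1$-term in the limit, leaving $(Q_0+\sum_{j=1}^m\lambda_j^*Q_j)\mathbf{x}^*=0$. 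This is exactly the first-order stationarity condition of~(\ref{eq:nc_qcqp}) in the variable $\mathbf{x}$; together with primal feasibility, $\lambda^*\ge 0$, and the limiting complementary slackness $\lambda_j^*(\langle X^*,Q_j\rangle-c_j)=0$, the pair $(\mathbf{x}^*,\lambda^*)$ satisfies the KKT system, so $X^*$ is a local optimum.

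The main obstacle I anticipate is precisely this unbounded dual variable $S_1$ combined with the one-step lag between $V_{k-1}$ and $X_k$: the cancellation of the $S_1$-term requires $V_{k-1}^T\mathbf{x}^*\to 0$ \emph{quickly enough} relative to the growth $\|S_1\|\sim w^k$, since the product $V_{k-1}S_1V_{k-1}^T\mathbf{x}^*$ is an indeterminate form of a blowing-up factor times a vanishing one. The complementary-slackness relation $\tr(S_1)\,r_k=\langle V_{k-1}S_1V_{k-1}^T,X_k\rangle$ controls the overlap of $V_{k-1}S_1V_{k-1}^T$ with $X_k$, and quantifying this against the eigenvector rate is where the real work lies. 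Establishing the eigenvalue gap at $X^*$ (a simple, strictly positive top eigenvalue) and the boundedness of $\{\lambda_k^*\}$ under a constraint qualification is essential; without them one can only conclude subsequential convergence to a KKT point rather than to a strict local minimizer.
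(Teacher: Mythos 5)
Your route is genuinely different from the paper's. The paper never invokes the dual of the subproblem in this proposition: it argues directly from $r_k\to 0$ that the constraint forces $\lim_{k\to\infty}V_k^TX_{k+1}V_k=\textbf{0}$, uses positive semidefiniteness to conclude the limiting iterates have rank at most one, subtracts to get $\lim_{k\to\infty}V_k^T(X_{k+1}-X_k)V_k=\textbf{0}$ and hence $\lim_{k\to\infty}X_{k+1}=\alpha X_k$, so the eigenvector matrix $V_k$ freezes; once $V_k$ is fixed, every subsequent iteration solves one and the same convex program, and the paper simply asserts that its solution is a local optimum of (\ref{eq:nc_qcqp}). Your first two steps (compactness and a spectral-gap argument giving $V_{k-1}\to V^*$, then closedness of the feasible set plus $(V^*)^TX^*V^*=\textbf{0}$ and Proposition 1 giving rank-one feasibility) are a cleaner, more careful version of what the paper does implicitly; your third step --- passing the KKT system of (\ref{eq:irm}) to the limit to produce an explicit first-order certificate $(Q_0+\sum_{j=1}^m\lambda_j^*Q_j)\mathbf{x}^*=0$ for the original QCQP --- has no counterpart in the paper at all. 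What your approach buys is a genuine stationarity certificate and rigor about subsequences; what the paper's approach buys is brevity, and immunity to the dual-multiplier blow-up, since it never touches $S_1$.

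That blow-up is, however, a real hole in your argument, and you have named it yourself: the stationarity identity requires $V_{k-1}S_1V_{k-1}^T\mathbf{x}^*\to 0$, but $\tr(S_1)=w^k\to\infty$ while $V_{k-1}^T\mathbf{x}^*\to 0$, and neither your sketch nor anything in the paper supplies the rate comparison needed to resolve this indeterminate product (indeed, Proposition 2 gives $r_k\sim w^{-k}$, which suggests the two rates are of exactly the same order, so the limit could well be a nonzero vector). Without closing this, the limiting KKT system is not established. A second, unflagged overreach: even if the cancellation were secured, KKT is only a \emph{necessary} condition for the nonconvex problem (\ref{eq:nc_qcqp}), so a limiting KKT point may be a saddle; concluding ``local optimum'' would need a second-order argument you do not provide. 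To be fair, the paper's own final step (``with a fixed $V_k$, the subproblem will converge to a local optimum'') is asserted at a comparable level of informality, so your proposal, though incomplete at these two points, is the more ambitious and more honest account of where the difficulty actually lies.
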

\begin{proof}
When $\lim_{k\to \infty}r_k= 0$, the semidefinite constraint in (\ref{eq:irm}) will become $\lim_{k \to \infty} -V_{k}^{T}X_{k+1}V_{k}\succeq \textbf{0}.$
For a semidefinite matrix $X_{k+1}\succeq \textbf{0}$, it leads to
$\lim_{k \to \infty} V_{k}^{T}X_{k+1}V_{k}=\textbf{0}$.
Since the above equation is a similarity transformation and as known before, $V_k \in \mathbb{R}^{n\times (n-1)}$ and $X_{k+1} \in \mathbb{S}_+^{n}$, it implies that the rank of $X_k$ is no more than one when $k$ approaches infinity. Hence we have
$\lim_{k \to \infty} V_{k}^{T}X_kV_{k} = \textbf{0}$.
Subtracting $\lim_{k \to \infty} V_{k}^{T}X_kV_{k}$ from $\lim_{k \to \infty} V_{k}^{T}X_{k+1}V_{k}$ yields $\lim_{k \to \infty} V_{k}^{T}(X_{k+1} - X_{k})V_{k} = \textbf{0}$. As $X_k$ is a positive semidefinite matrix with a rank of no more than one, one can get $\lim_{k \to \infty} X_{k+1} = \alpha X_k$, where $\alpha\in\mathbb{R}$.
%
The above relationship indicates that $V_{k}$ is constant when $k \to \infty$ since $X_k$ and $\alpha X_k$ share the same eigenvectors. Consequently, with a fixed $V_k$, the subproblem will converge to a local optimum.
\end{proof}

\section{An Extended Uzawa Algorithm for Convex Optimization}
Each iteration of IRM method requires to solve a convex optimization problem with linear matrix inequalities (LMIs). For large scale QCQPs, the performance of the convex optimization solver determines the scalability and computational capability of the proposed IRM method. However, existing convex optimization solvers, such as SeDuMi~\cite{sturm1999using} and SDPT3~\cite{toh1999sdpt3}, are time consuming and not applicable to large scale convex problems, especially to problems with a dimension larger than $100$ and multiple LMIs. Therefore, an extended Uzawa algorithm is developed here to solve the convex optimization problem at each iteration of IRM.

The Uzawa algorithm is originally introduced to solve concave problems~\cite{elman1994inexact}. When strong duality holds for a primal-dual problem, the optimal solution is the saddle point of the Lagrangian. Therefore, Uzawa algorithm is applied to iteratively approach the saddle point of the Lagrangian. Work in \cite{cai2010singular} has applied Uzawa algorithm to matrix completion problems with linear scalar/vector constraints. An extended Uzawa algorithm is developed here for convex problems with both scalar and LMI constraints.

To make it general, we consider the following convex optimization problem,
\begin{eqnarray}
\label{eq:SDP}
     & J=\min_x{f_0(\mathbf{x})}   \\
s.t. & f_i(\mathbf{x}) \le 0,\,i=1,...,N, \nonumber\\
     & \mathcal{A}_j(\mathbf{x}) \preceq \textbf{0},\, j=1,...,J, \nonumber
\end{eqnarray}
where $\mathbf{x}\in\mathbb{R}^n$ are the unknown variables, $f_i(\mathbf{x})\in\mathbb{R},\,i=0,...,N,$ are convex functions and $\mathcal{A}_j(\mathbf{x})\in\mathbb{S}^{n_j},\,j=1,...,J,$ are LMIs. For simplicity, we define that
$\mathcal{F}(\mathbf{x})\in \mathbb{R}^{l\times l}:=\diag(f_1(\mathbf{x}),...,f_N(\mathbf{x}),\mathcal{A}_1(\mathbf{x}),...,\mathcal{A}_J(\mathbf{x}))$. Without loss of generality, we assume that $\mathcal{F}(x)$ is symmetric.
In addition, as linear equality constraints can be written as a pair of linear inequality constraint, they are omitted in the formulation. The lagrangian function for (\ref{eq:SDP}) is given by
$\mathcal{L}(\mathbf{x},S)=f_0(\mathbf{x})+\langle S, \mathcal{F}(\mathbf{x})\rangle$, where $S\in \mathbb{R}^{l\times l}$ is the dual matrix variable. For a convex problem in (\ref{eq:SDP}) satisfying Slater's condition, the strong duality holds such that the primal-dual optimal pair, $(\mathbf{x}^*,S^*)$, has the relationship such that $\text{sup}_S \text{inf}_\mathbf{x} \mathcal{L}(\mathbf{x},S) = \mathcal{L}(\mathbf{x}^*,S^*) = \text{inf}_\mathbf{x} \text{sup}_S \mathcal{L}(\mathbf{x},S)$.

Consequently, the saddle point of the Lagrangian is the optimal pair, $(\mathbf{x}^*,S^*)$, which can be determined via the Uzawa's algorithm. As the initial value of Lagrangian multipliers is trivial for a convex problem, $S_0 = \textbf{0}$ is selected as the starting point and the iteration procedure of the extended Uzawa algorithm is formulated as
\begin{eqnarray}
\label{eq:Uzawa}
     & \mathbf{x}_h = \text{arg min}_\mathbf{x}\mathcal{L}(\mathbf{x},S_{h-1})  \\
     & S_h = [S_{h-1} + \delta_h \mathcal{F}(x_h)]_+,\nonumber
\end{eqnarray}
where $\delta_h > 0$ is the step size at iteration $h$ and the operator $[\bullet]_+$ is defined according to the data type of $'\bullet'$. For a vector $s\in\mathbb{R}^l$, $[s_i]_+ = \text{max}(0,s_i),\,i=1,\ldots,l$. While for a matrix $S\in \mathbb{S}^{l}$ with eigenvalues $\lambda\in \mathbb{R}^l$ and corresponding eigenvectors $V\in\mathbb{R}^{l\times l}$, $[S]_+ = V \diag(\text{max}(\lambda,0))V^T$, where $\text{max}(\lambda,0)$ will replace negative value elements in $\lambda$ with zeros. 
The following efforts focus on the convergence proof of the extended Uzawa algorithm stated in (\ref{eq:Uzawa}). The convergence proof of Uzawa algorithm for convex problems with linear scalar/vector constraints can be found in ~\cite{cai2010singular,cheng1984gradient}. However, the statement below focuses on proving the convergence of the extended Uzawa algorithm developed to solve more general convex problems including LMI constraints.

\begin{Assumption}\label{Assumption:Lipchitzs}
$\mathcal{F}(\mathbf{x})$ is Lipchitz continuous. Namely, for any $\mathbf{x}_1,\mathbf{x}_2$,
$\|\mathcal{F}(\mathbf{x}_1) - \mathcal{F}(\mathbf{x}_2)\|_F \le L(\mathcal{F})\|\mathbf{x}_1 -\mathbf{x}_2\|_F$ holds for some nonnegative constant $L(\mathcal{F})$.
\end{Assumption}


\begin{Assumption}\label{Assumption:xi}
	The objective function $f_0(\mathbf{x})$ is strongly convex.
\end{Assumption}

We first establish a preparatory lemma for the convergence proof.
\begin{Lemma}\label{Lemma:Positive}
Let$(\mathbf{x}^*,S^*)$ be an optimal primal-dual pair for (\ref{eq:SDP}), then for each $\delta \ge 0$, we have
$S^* = [S^* + \delta \mathcal{F}(\mathbf{x}^*)]_+$.
\end{Lemma}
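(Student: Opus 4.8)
The plan is to read this lemma as the fixed-point characterization of the optimal dual that every projected dual-ascent (Uzawa) scheme relies on, and to derive it directly from the KKT conditions for (\ref{eq:SDP}). Since (\ref{eq:SDP}) is convex and, by the Slater condition invoked above, enjoys strong duality, the primal-dual optimum $(\mathbf{x}^*,S^*)$ must satisfy primal feasibility $\mathcal{F}(\mathbf{x}^*)\preceq\textbf{0}$, dual feasibility $S^*\succeq\textbf{0}$, and complementary slackness $\langle S^*,\mathcal{F}(\mathbf{x}^*)\rangle=0$. Because the operator $[\bullet]_+$ acts separately on the scalar entries and on the LMI blocks of the block-diagonal matrix $\mathcal{F}$, I would verify the claimed identity block by block; equivalently, viewing each scalar $f_i$ as a $1\times1$ LMI block, the whole statement reduces to a single matrix-projection argument.

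For a scalar block the claim is immediate from $s_i^*\ge0$, $f_i(\mathbf{x}^*)\le0$ and $s_i^*f_i(\mathbf{x}^*)=0$: if $s_i^*>0$ then $f_i(\mathbf{x}^*)=0$ and $[s_i^*+\delta f_i(\mathbf{x}^*)]_+=s_i^*$, while if $s_i^*=0$ then $[\delta f_i(\mathbf{x}^*)]_+=0=s_i^*$ since $\delta f_i(\mathbf{x}^*)\le0$. The substance lies in the matrix blocks, where I would first upgrade the scalar complementarity to the matrix identity $S^*\mathcal{F}(\mathbf{x}^*)=\textbf{0}$. This uses the standard fact that for $A\succeq\textbf{0}$ and $B\succeq\textbf{0}$, $\tr(AB)=0$ forces $AB=\textbf{0}$: writing $\tr(AB)=\tr(A^{1/2}BA^{1/2})$ as the trace of a PSD matrix shows $A^{1/2}BA^{1/2}=\textbf{0}$, hence $AB=\textbf{0}$. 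Applying this with $A=S^*$ and $B=-\mathcal{F}(\mathbf{x}^*)$ gives $S^*\mathcal{F}(\mathbf{x}^*)=\textbf{0}$, and transposing (both are symmetric) yields $\mathcal{F}(\mathbf{x}^*)S^*=\textbf{0}$, so the two matrices commute.

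The key step is then simultaneous diagonalization of the commuting symmetric pair: in a common orthonormal basis, $S^*=\diag(\sigma_1,\dots,\sigma_{n_j})$ with $\sigma_i\ge0$ and $\mathcal{F}(\mathbf{x}^*)=\diag(\phi_1,\dots,\phi_{n_j})$ with $\phi_i\le0$, and the zero-product relation forces $\sigma_i\phi_i=0$ for each $i$. In this basis $S^*+\delta\mathcal{F}(\mathbf{x}^*)$ is diagonal with entries $\sigma_i+\delta\phi_i$, and I would finish by an eigenvalue-by-eigenvalue check: where $\sigma_i>0$ we have $\phi_i=0$, so the entry is $\sigma_i>0$ and the PSD projection leaves it fixed; where $\sigma_i=0$ the entry is $\delta\phi_i\le0$, which the projection sends to $0=\sigma_i$. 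Hence $[S^*+\delta\mathcal{F}(\mathbf{x}^*)]_+$ has exactly the diagonal of $S^*$ in the shared eigenbasis, i.e. $[S^*+\delta\mathcal{F}(\mathbf{x}^*)]_+=S^*$ for every $\delta\ge0$.

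The main obstacle I anticipate is the passage from the scalar complementary-slackness condition $\langle S^*,\mathcal{F}(\mathbf{x}^*)\rangle=0$ to the matrix-level facts it conceals, namely the product-zero identity $S^*\mathcal{F}(\mathbf{x}^*)=\textbf{0}$ and the resulting simultaneous diagonalizability. Once those are secured, the remainder is routine eigenvalue bookkeeping and the scalar blocks are essentially automatic; so the proof turns entirely on correctly exploiting that a commuting pair of opposite-sign semidefinite symmetric matrices with vanishing product admits a common eigenbasis in which, coordinate by coordinate, at most one of them is nonzero.
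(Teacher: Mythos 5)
Your proof is correct and takes essentially the same route as the paper's: both start from $S^*\succeq\textbf{0}$, $\mathcal{F}(\mathbf{x}^*)\preceq\textbf{0}$, and complementary slackness $\langle S^*,\mathcal{F}(\mathbf{x}^*)\rangle=0$, pass to the matrix identity $S^*\mathcal{F}(\mathbf{x}^*)=\mathcal{F}(\mathbf{x}^*)S^*=\textbf{0}$, simultaneously diagonalize the commuting pair in a common orthonormal basis, and verify $[S^*+\delta\mathcal{F}(\mathbf{x}^*)]_+=S^*$ eigenvalue by eigenvalue. If anything, your argument is more careful than the paper's at the one nontrivial step: you explicitly prove that $\tr(AB)=0$ with $A,B\succeq\textbf{0}$ forces $AB=\textbf{0}$ (via $A^{1/2}BA^{1/2}$), whereas the paper asserts $S^*\mathcal{F}(\mathbf{x}^*)=Q\Lambda_{S^*}\Lambda_{\mathcal{F}(\mathbf{x}^*)}Q^T=\textbf{0}$ without spelling out why the product vanishes or why the two matrices share an eigenbasis.
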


\begin{proof}
As $(\mathbf{x}^*,S^*)$ is an optimal primal-dual pair, then $\langle S^*,\mathcal{F}(\mathbf{x}^*) \rangle = 0$, $S^* \succeq \textbf{0}$, and $\mathcal{F}(\mathbf{x}^*) \preceq \textbf{0}$.
Additionally, based on the fact that matrices $\mathcal{F}(\mathbf{x}^*)$ and $S^*$are symmetric, then $S^*\mathcal{F}(\mathbf{x}^*)$ is diagonalizable and can be written as $ S^*\mathcal{F}(\mathbf{x}^*)  = \mathcal{F}(\mathbf{x}^*)S^*= Q\Lambda_{S^*} \Lambda_{\mathcal{F}(\mathbf{x}^*)} Q^T = \textbf{0}$,
where $\Lambda_{S^*}\Lambda_{\mathcal{F}(\mathbf{x}^*)}  = \textbf{0} $,  $\Lambda_{\mathcal{F}(\mathbf{x}^*)} \preceq \textbf{0}$, and $ \Lambda_{S^*} \succeq \textbf{0}$.
Consequently, we can get $[S^* + \delta \mathcal{F}(\mathbf{x}^*)]_+ = Q[\Lambda_{S^*} + \delta \Lambda_{\mathcal{F}(\mathbf{x}^*)}]_+Q^T = S^*$.
When $\mathcal{F}(\mathbf{x}^*)$ is a scalar constraint, Lemma (\ref{Lemma:Positive}) degenerates to
$\lambda^* = [\lambda^* + \delta \mathcal{F}(\mathbf{x}^*)]_+$, where $\lambda^*\in\mathbb{R}$ is the dual variable of $\mathcal{F}(\mathbf{x}^*)$.
\end{proof}



\begin{prop}\label{Theorem:UzawaConvergence}
	Assuming problem (\ref{eq:SDP}) satisfies Assumptions (\ref{Assumption:Lipchitzs}) and (\ref{Assumption:xi})  and the step size $\delta_h$ in (\ref{eq:Uzawa}) satisfies $0 < \text{inf}\delta_h \le \text{sup}\delta_h < 2\xi/\|L(\mathcal{F})\|^2$, where $\xi$ and $L(\mathcal{F})$ are the parameters in the two aforementioned assumptions, respectively, then the sequence obtained from (\ref{eq:Uzawa}) will converge to the global optimum of the convex problem (\ref{eq:SDP}) when its strong duality holds.
\end{prop}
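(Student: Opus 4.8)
The plan is to establish convergence by tracking the distance between the dual iterate $S_h$ and the optimal dual variable $S^*$, showing it is a contraction under the stated step-size condition. First I would define the error term $\|S_h - S^*\|_F$ and, using the dual update in (\ref{eq:Uzawa}) together with Lemma~\ref{Lemma:Positive} (which gives $S^* = [S^* + \delta_h \mathcal{F}(\mathbf{x}^*)]_+$), rewrite the difference. The key observation is that the projection $[\bullet]_+$ onto the positive semidefinite cone is nonexpansive in the Frobenius norm, so $\|S_h - S^*\|_F = \|[S_{h-1} + \delta_h \mathcal{F}(\mathbf{x}_h)]_+ - [S^* + \delta_h \mathcal{F}(\mathbf{x}^*)]_+\|_F \le \|(S_{h-1} - S^*) + \delta_h(\mathcal{F}(\mathbf{x}_h) - \mathcal{F}(\mathbf{x}^*))\|_F$. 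This lets me discard the nonlinearity of the projection and work with a plain linear expression.

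Next I would expand the squared norm on the right-hand side as
\begin{equation*}
\|S_{h-1} - S^*\|_F^2 + 2\delta_h \langle S_{h-1} - S^*, \mathcal{F}(\mathbf{x}_h) - \mathcal{F}(\mathbf{x}^*)\rangle + \delta_h^2 \|\mathcal{F}(\mathbf{x}_h) - \mathcal{F}(\mathbf{x}^*)\|_F^2.
\end{equation*}
To control the middle cross term I would invoke the optimality/saddle-point characterization of the primal minimizers. Because $\mathbf{x}_h = \arg\min_{\mathbf{x}} \mathcal{L}(\mathbf{x}, S_{h-1})$ and $\mathbf{x}^* = \arg\min_{\mathbf{x}} \mathcal{L}(\mathbf{x}, S^*)$, and since Assumption~\ref{Assumption:xi} makes $f_0$ strongly convex (say with modulus $\xi$), the Lagrangian is strongly convex in $\mathbf{x}$. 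Comparing the first-order stationarity conditions at the two points and pairing them against $\mathbf{x}_h - \mathbf{x}^*$ yields an inequality of the form $\langle S_{h-1} - S^*, \mathcal{F}(\mathbf{x}_h) - \mathcal{F}(\mathbf{x}^*)\rangle \le -\xi \|\mathbf{x}_h - \mathbf{x}^*\|_F^2$, i.e. the cross term is negative and bounded by the strong-convexity modulus. For the last term I would apply Assumption~\ref{Assumption:Lipchitzs}, bounding $\|\mathcal{F}(\mathbf{x}_h) - \mathcal{F}(\mathbf{x}^*)\|_F^2 \le L(\mathcal{F})^2 \|\mathbf{x}_h - \mathbf{x}^*\|_F^2$.

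Combining these gives
\begin{equation*}
\|S_h - S^*\|_F^2 \le \|S_{h-1} - S^*\|_F^2 - \left(2\delta_h \xi - \delta_h^2 L(\mathcal{F})^2\right)\|\mathbf{x}_h - \mathbf{x}^*\|_F^2,
\end{equation*}
and the step-size restriction $0 < \inf \delta_h \le \sup \delta_h < 2\xi / \|L(\mathcal{F})\|^2$ guarantees the coefficient $2\delta_h \xi - \delta_h^2 L(\mathcal{F})^2$ is uniformly bounded below by a positive constant. Thus $\|S_h - S^*\|_F^2$ is monotonically nonincreasing and bounded below, so it converges; summing the telescoping inequality forces $\sum_h \|\mathbf{x}_h - \mathbf{x}^*\|_F^2 < \infty$, hence $\mathbf{x}_h \to \mathbf{x}^*$, the global optimum of (\ref{eq:SDP}).

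I expect the main obstacle to be the cross-term estimate in the second step. Making the pairing of the stationarity conditions fully rigorous requires care: one must correctly handle the matrix inner product $\langle S, \mathcal{F}(\mathbf{x})\rangle$ and its gradient in $\mathbf{x}$, and confirm that the monotonicity inequality $\langle S_{h-1} - S^*, \mathcal{F}(\mathbf{x}_h) - \mathcal{F}(\mathbf{x}^*)\rangle \le -\xi \|\mathbf{x}_h - \mathbf{x}^*\|_F^2$ genuinely follows from strong convexity of $\mathcal{L}(\cdot, S)$ combined with the two minimization conditions, rather than a weaker monotonicity that would leave the sign ambiguous. Establishing this cleanly is what makes the contraction argument work.
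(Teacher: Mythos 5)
Your proposal follows essentially the same route as the paper's own proof: nonexpansiveness of the projection $[\bullet]_+$ combined with Lemma~\ref{Lemma:Positive}, expansion of the squared Frobenius norm, the cross-term bound $\langle S_{h-1}-S^*,\mathcal{F}(\mathbf{x}_h)-\mathcal{F}(\mathbf{x}^*)\rangle \le -\xi\|\mathbf{x}_h-\mathbf{x}^*\|_F^2$ obtained by pairing the first-order optimality conditions at $(\mathbf{x}_h,S_{h-1})$ and $(\mathbf{x}^*,S^*)$ with strong convexity of $f_0$ (exactly the paper's steps (\ref{eq:4theorem})--(\ref{eq:stronglyconvex})), and Assumption~\ref{Assumption:Lipchitzs} to produce the contraction coefficient $2\xi\delta_h-\delta_h^2 L(\mathcal{F})^2 \ge \beta > 0$. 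Your final telescoping step, deducing $\sum_h\|\mathbf{x}_h-\mathbf{x}^*\|_F^2 < \infty$ and hence $\mathbf{x}_h \to \mathbf{x}^*$, is in fact slightly more explicit than the paper, which stops at the contraction inequality.
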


\begin{proof}
	At each iteration $h$ of the extended Uzawa algorithm defined in (\ref{eq:Uzawa}), the solution $\mathbf{x}_h$ minimizes $\mathcal{L}(\mathbf{x},S_{h-1})=f(\mathbf{x})+\langle S_{h-1},\mathcal{F}(\mathbf{x})\rangle$, then the first order optimality condition is expressed as
	\begin{eqnarray}\label{eq:2theorem}
	\nabla f(\mathbf{x}_h) +
	\begin{bmatrix}
	\langle S_{h-1},\frac{\partial{\mathcal{F}}}{\partial{(\mathbf{x}_h^{1})}} \rangle,
	\ldots,
	\langle S_{h-1},\frac{\partial{\mathcal{F}}}{\partial{(\mathbf{x}_h^{n})}} \rangle
	\end{bmatrix}^T
	\!\!\!=0,
	\end{eqnarray}
	where the superscript denotes the index of vector $\mathbf{x}$. 
	Moreover, as $f_i,\,i=1,...,N$, is a convex function and $\mathcal{A}_j,\,j=1,...,J$, is a LMI, the first order condition holds for the constructed $\mathcal{F}$ in the form of
	\begin{eqnarray}\label{eq:3theorem}
	\mathcal{F}(\mathbf{x}) - \mathcal{F}(\mathbf{x}_h) \succeq \sum\limits_{i=1}^n \frac{\partial{\mathcal{F}}}{\partial{(\mathbf{x}_h^{i})}}(\mathbf{x}^i - \mathbf{x}_h^{i}).
	\end{eqnarray}
	From (\ref{eq:2theorem}) and (\ref{eq:3theorem}), one can get
	\begin{eqnarray}\label{eq:4theorem}
	\langle \nabla f(\mathbf{x}_h),\mathbf{x}-\mathbf{x}_h \rangle + \langle S_{h-1},\mathcal{F}(\mathbf{x}) - \mathcal{F}(\mathbf{x}_h) \rangle \ge 0.
	\end{eqnarray}
	At the optimal point, the inequality relationship stated in (\ref{eq:4theorem}) will be satisfied as well when substituting $(\mathbf{x}_h,S_{h-1})$ by the primal-dual pair $(\mathbf{x}^*,S^*)$, which is expressed as
	\bea\label{eq:1theorem}
	\langle \nabla f_0(\mathbf{x}^*),\mathbf{x}-\mathbf{x}^* \rangle + \langle S^*,\mathcal{F}(\mathbf{\mathbf{x}}) - \mathcal{F}(\mathbf{x}^*) \rangle \ge 0.
	\eea
	Since the inequality relationships in (\ref{eq:4theorem}) and (\ref{eq:1theorem}) hold for all $\mathbf{x}$, substituting $\mathbf{x}$ in (\ref{eq:4theorem}) by $\mathbf{x}^*$ and (\ref{eq:1theorem}) by $\mathbf{x}_h$ and then adding them gives
	\begin{eqnarray} \label{eq:5theorem}
	\langle \nabla f(\mathbf{x}_h) - \nabla f(\mathbf{x}^*),\mathbf{x}_h - \mathbf{x}^* \rangle + \hspace{2.5cm} \nonumber\\
	\hspace{2cm} \langle S_{h-1} - S^*,\mathcal{F}(\mathbf{x}_h) - \mathcal{F}(\mathbf{x}^*) \rangle \le 0.
	\end{eqnarray}
	%
From Assumption \ref{Assumption:xi} and the properties of strongly convex function \cite{bertsekas2003convex}, there must exist a sufficiently small $\xi$ such that
	\begin{eqnarray}\label{eq:stronglyconvex}
	\langle \nabla f(\mathbf{x}_h) - \nabla f(\mathbf{x}^*),\mathbf{x}_h - \mathbf{x}^* \rangle    \ge \xi \|\mathbf{x}_h - \mathbf{x}^*\|_F^2.
	\end{eqnarray}
		Combing  (\ref{eq:5theorem})  and (\ref{eq:stronglyconvex}) gives $\langle S_{h-1} - S^*,\mathcal{F}(\mathbf{x}_h) - \mathcal{F}(\mathbf{x}^*) \rangle
	\le -\langle \nabla f(\mathbf{x}_h) - \nabla f(\mathbf{x}^*),\mathbf{x}_h - \mathbf{x}^* \rangle
	\le -\xi \|\mathbf{x}_h - \mathbf{x}^*\|_F^2$.
	Since $S_h$ in the extended Uzawa algorithm is updated via
	$S_h = [S_{h-1} + \delta_h \mathcal{F}(\mathbf{x}_h)]_+$, and Lemma (\ref{Lemma:Positive}) holds for $\delta_h$ such that $
	S^* = [S^* + \delta_h \mathcal{F}(\mathbf{x}^*)]_+$,
	then subtracting $S^*$ from $S_h$ gives $
	\|S_h - S^*\|_F^2  =\|[S_{h-1} + \delta_h \mathcal{F}(\mathbf{x}_h)]_+ - [S^* + \delta_h \mathcal{F}(\mathbf{x}^*)]_+ \|_F^2$.
	 The upper bound on $\|S_h - S^*\|_F^2$ is determined by
	 \bea\label{eq:bound}
	 \!\!\!\!&&\!\!\!\!\|S_h - S^*\|_F^2\le\| S_{h-1} - S^* + \delta_h(\mathcal{F}(\mathbf{x}_h) - \mathcal{F}(\mathbf{x}^*))\|_F^2  \nonumber\\
	 \!\!\!\!&\!\!\!\!\!\!\!\!\!\!\!\!\le\!\!\!\!\!\!\!\!\!\!\!\!& \!\!\!\!\| S_{h-1} - S^*\|_F^2 + 2\delta_h \langle S_{h-1} - S^*, \mathcal{F}(\mathbf{x}_h) - \mathcal{F}(\mathbf{x}^*) \rangle\nonumber\\
	 \!\!\!\!&&\!\!\!\!+ \delta_h^2 \|\mathcal{F}(\mathbf{x}_h) - \mathcal{F}(\mathbf{x}^*)\|_F^2 \nonumber\\
	 \!\!\!\!&\!\!\!\!\!\!\!\!\!\!\!\!\le\!\!\!\!\!\!\!\!\!\!\!\!&\!\!\!\!\| S_{h-1} - S^*\|_F^2-(2\xi\delta_h - \delta_h^2 L(\mathcal{F})^2)\|\mathbf{x}_h - \mathbf{x}^*\|_F^2.
	 \eea
	As it is assumed that $0 < \text{inf}\delta_h \le \text{sup}\delta_h < 2\xi/\|L(\mathcal{F})\|^2$, there exists a $\beta >0$ such that $2\xi\delta_h - \delta_h^2 L(\mathcal{F})^2 \ge \beta$ for all $h>1$. Then (\ref{eq:bound}) becomes $\|S_h - S^*\|_F^2 \le \| S_{h-1} - S^*\|_F^2 - \beta\|\mathbf{x}_h - \mathbf{x}^*\|_F^2$,
	and thus the proposition is proved.
\end{proof}

Considering the aforementioned subproblem, formulated in (\ref{eq:irm}), at each iteration of IRM, a quadratic term is included in the objective function and the proximal objective is expressed as
$J^{'} =\tau(X_k\bullet Q_0 + \omega^k r_k) + \frac{1}{2}(\|X_k\|_F^2 + r_k^2)$,
where $\tau$ is a weighting factor of the primary objective function. Optimizing the above proximal objective function can be handled as approximately optimizing the original objective at any prescribed accuracy as long as $\tau$ is properly selected and the problem is bounded. As stated in problem (\ref{eq:irm}), $X_k$ and $r_k$ are the unknown variables to be solved at iteration step $k$ of IRM. It is easy to check that $J^{'}$ is strongly convex and thus it satisfies Assumption \ref{Assumption:xi}.
Moreover, Assumption \ref{Assumption:Lipchitzs} and strong duality are satisfied as well due to the linear (matrix) constraints. According to the extended Uzawa algorithm, at each iteration step $h$ the unknown variables and Lagrangian multipliers in problem (\ref{eq:irm}) are updated through  (\ref{eq:Uzawa}). Noteworthily, the initial values of Lagrangian multipliers are set as $\lambda_i = 0$, $\mu_j = 0$, $S_1 = \textbf{0}$ and $S_2 = \textbf{0}$ as the initial setting is trivial for a convex problem.

\section{SIMULATION}
To verify the feasibility and efficiency of the proposed IRM method and the extended Uzawa Algorithm, two types of simulation examples are provided.  The first one solves mixed-boolean quadratic programming problems using the proposed IRM method where the subproblem at each iteration is solved via the extended Uzawa algorithm. The second one applies the IRM method in an optimal attitude control problem to verify the effectiveness of IRM in real applications. All of the simulation is run on a desktop computer with a 3.50 GHz processor and a 16 GB RAM.
	\subsection{Mixed-Boolean Quadratic Programming}
	In this subsection, the proposed IRM method is applied to solve mixed-boolean quadratic programming problems formulated as,
	\begin{eqnarray}
	\label{eq:miqcqp}
	& J=\min{\mathbf{x}^TQ_0\mathbf{x}}  \nonumber\\
	s.t. & \mathbf{x}^TQ_i\mathbf{x} + c_i \le 0 ,\; \forall\; i=1,\ldots,m, \\
	& \mathbf{x}_j \in \{1,-1\} ,\; \forall\; j \in N_I, \;\mathbf{x}_l \le \mathbf{x}_k\le \mathbf{x}_u ,\; \forall\; k \not\in N_I, \nonumber
	\end{eqnarray}
	where $\mathbf{x}\in \mathbb{R}^{50}$, $m$ is the number of inequality constraints, $N_I$ is the index set of the integer variables, and $\mathbf{x}_l$ and $\mathbf{x}_u$ are the lower and upper bounds of the continuous variables, respectively. The matrices $Q_0$ and $Q_i$, $i=1,\ldots,m$, are randomly generated and they are not necessarily positive semidefinite. Since the bivalent constraint on integer variables, $\mathbf{x}_j$, can be expressed as a quadratic equality constraint in the form of $(\mathbf{x}_j+1)(\mathbf{x}_j-1)=0,\,j\in N_I$, problem (\ref{eq:miqcqp}) can be converted to a nonconvex QCQP problem which can be solved by the proposed IRM method. The parameters in IRM are set as $w=2$ and $\epsilon=1e-5$.
	
	The comparative results are obtained from the Tomlab mixed-integer nonlinear programming solver, `minlpBB', which utilizes branch and bound to search for optimal solutions ~\cite{holmstrom1997tomlab}. 50 random cases are generated and solved via both IRM and `minlpBB'. For each case, the objective value obtained from both methods are recorded in Fig. \ref{f:CasesCompariosn50}. After comparison, the objective value obtained from IRM is always smaller than the corresponding one computed from `minlpBB' for all of the 50 cases. These facts validate the advantages of IRM in solving nonconvex QCQP problems. Furthermore, the value of $r_k$, representing the second largest eigenvalue of the unknown matrix $X$, at each iteration is demonstrated in Fig. \ref{fig:r_k} for one case. As $r_k$ converges to a number close to zero within $9$ iterations, Fig. \ref{fig:r_k} verifies the convergence of the IRM method to a rank one matrix. The other cases also yield zero $r_k$ at the convergence point. To save space, the values of $r_k$ for the other cases are not displayed here.
	
For each iteration of the IRM method, it will take the extended Uzawa algorithm 1 to 2 seconds to solve the convex problem formulated in (\ref{eq:irm}) for the 50 cases discussed above. Thus, the overall computation time using the combined IRM and extended Uzawa algorithms ranges from 10 to 20 seconds. However, it takes significantly increased computational time, around 10 to 100 times longer, to find the solution of each iteration of the IRM method using the `SeDuMi' convex optimization solver. Furthermore, the relative error, $|\frac{J(X^*_S) - J(X^*_U)}{J(X^*_S)}|$, between objective values from `SeDuMi', denoted by $J(X^*_S)$, and extended Uzawa algorithm, denoted by $J(X^*_U)$, averages 0.41\% for all 50 cases. The average computation time of the `minlpBB' solver is 2.1 secs for the 50 cases.
	\begin{figure}[!h]
		\vspace{-2.5cm}
		\centering
		\hspace{-6.7cm}
		\includegraphics[scale=0.43]{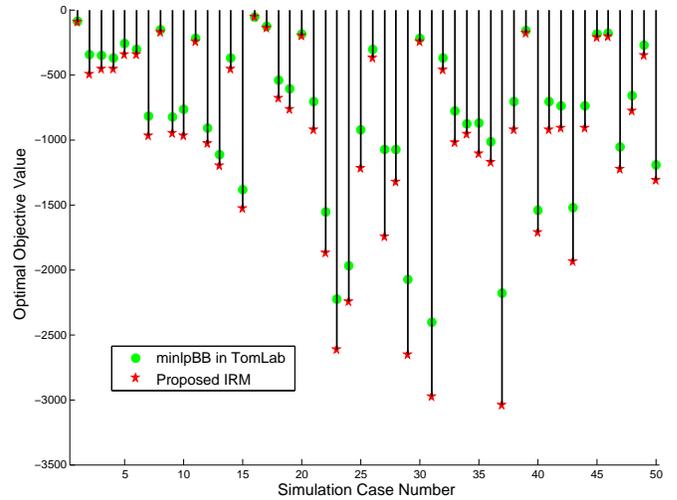}
		\hspace{-7.1cm}
		\vspace{-2.8cm}
		\caption{Comparative results between minlpBB and IRM for 50 cases}
		\vspace{-0.5cm}\label{f:CasesCompariosn50}
	\end{figure}
	
	\begin{figure}[!h]
		\vspace{-4.9cm}
		\centering
		\hspace{-6.7cm}
		\includegraphics[scale=0.56]{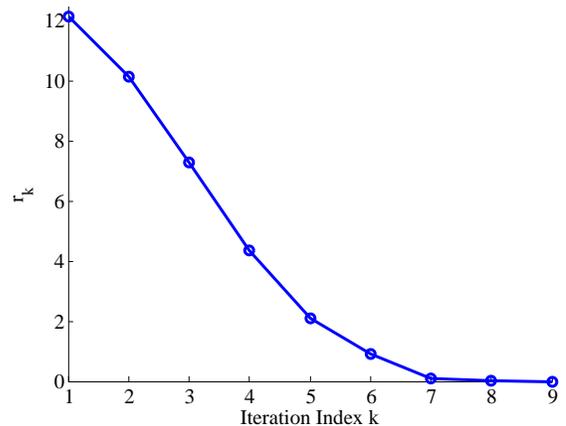}
		\hspace{-7.1cm}
		\vspace{-5.0cm}
		\caption{Value of $r_k$ at each iteration}
		\label{fig:r_k}
	\end{figure}
	
	\subsection{Optimal Attitude Control}
	To further verify the effectiveness and feasibility of IRM in real applications, the optimal attitude control problem for spacecraft is considered here. The objective of the optimal attitude control problem is to find the optimal control torque $\mathbf{u} \in \mathbb{R}^{3}$ to maneuver the orientation of spacecraft with minimum control efforts while satisfying a set of constraints over time interval $t \in [t_0,t_f]$. The constraints include boundary conditions, rotational dynamics, unit quaternion kinematics, and attitude forbidden and mandatory zones.

	In summary, the optimal control problem to minimize total control efforts for spacecraft reorientation with constraints can be formulated as
	\begin{eqnarray}\label{eq:performace index}
	&\min_{\mathbf{u},\omega,\mathbf q}  {\int_{t_0}^{t_f} \mathbf{u}^{T}\mathbf{u}\,dt}\\
	s.t. &\mathbf{J}\dot{\boldsymbol{\omega}}(t) = \mathbf{J}\boldsymbol{\omega} \times \boldsymbol{\omega} + \mathbf{u} \nonumber\\
	&\dot{\mathbf q}(t) = \frac{1}{2} \mathbf{\Omega}(t)\mathbf{q}(t),\;\|\mathbf q\| = 1 \nonumber\\
	&  \mathbf q^{T}M_{f_{l}} \mathbf q\le 0,\,l=1,\cdots,n'\nonumber \\
	&  \mathbf q^{T}M_{m_{s}} \mathbf q\ge 0 ,\,s=1,\cdots,h'\nonumber \\
	&|\mathbf{u}_i| \le \beta_{u_i},\, |\boldsymbol{\omega}_i| \le \beta_{\omega_i},\,i=1,2,3 \nonumber \\
	&\boldsymbol{\omega}(t_0) = \boldsymbol{\omega}_0,\, \boldsymbol{\omega}(t_f) = \boldsymbol{\omega}_f,\, \mathbf{q}(t_0) = \mathbf{q}_0,\, \mathbf{q}(t_f) = \mathbf{q}_f,\nonumber
	\end{eqnarray}
	where $\mathbf{J}=\diag(J_1,J_2,J_3)$ represents the moment of inertia matrix of the spacecraft in the body frame, $\boldsymbol{\omega} = [\omega_1,\omega_2,\omega_3]^{T} \in \mathbb{R}^{3}$ denotes the spacecraft angular velocity in the body frame, and $\mathbf{q} = [q_1,q_2,q_3,q_4]^{T}\in \mathbb{R}^{4},\,\|\mathbf q\| = 1$ denotes the attitude in unit quaternions. 
	In addition, $\boldsymbol{\omega}_0$, $\mathbf{q}_0$, $\boldsymbol{\omega}_f$, and $\mathbf{q}_f$ represent boundary conditions on angular velocity and attitude orientation. $n'$ and $h'$ represent the number of forbidden and mandatory zones, respectively. $\beta_{u_i}$ and $\beta_{\omega_i}$, $i=1,2,3$ denote upper bounds of the control torque and the angular velocity elements, respectively. The forbidden zone matrices, $M_{f_{l}}\in \mathbb{R}^{4\times 4}$, $l=1,\cdots,n'$ are determined by the vector, $\mathbf{x}_{F_l}\in \mathbb{R}^{3}$, to be avoided with a constrained angle $\theta_{F_l}$. The mandatory zone matrices, $M_{f_{s}}\in \mathbb{R}^{4\times 4}$, $s=1,\cdots,h'$, are determined similarly by vector $\mathbf{x}_{M_s}\in \mathbb{R}^{3}$ and angle $\theta_{M_s}$. More detailed description of the optimal attitude control problem can be referred to \cite{Yoonsoo2010}.
	
	By utilizing the discretization technique, the optimal attitude control problem formulated as a NLP problem in (\ref{eq:performace index}) can be transformed into a nonconvex QCQP problem in the form of (\ref{eq:nc_qcqp}). One simulation result is demonstrated here to reorient the spacecraft with minimum total control efforts while preventing its telescope pointing vector from the three forbidden zones and keeping the antenna vector in the mandatory zone within $t \in [0,20]$ seconds. Three forbidden zones are randomly selected without overlapping each other but may overlap with the mandatory zone. In addition, both initial and terminal attitude are properly selected to prevent violation of the attitude constraints. The spacecraft is assumed to carry a light-sensitive telescope with a fixed boresight vector $\mathbf{y}_t$, defined as $\mathbf{y}_t = [0,1,0]^{T}$, while the boresight vector of the antenna is set as $\mathbf{y}_a = [0,0,1]^{T}$, both in the spacecraft body frame. The other simulation parameters are given in Table \ref{table:Simulation parameters 2}.
	
	\begin{table}[!h]
		\caption{Simulation parameters for optimal attitude control problem} 
		\centering 
		\begin{tabular}{c c} 
			\hline\hline 
			Parameter & Value\\ [0.5ex] 
			\hline 
			J & diag[54,63,59]  \textit{kg$\cdot m^2$} \\ [1ex]
			\hline
			$|\omega_i|,i=1,2,3$ & $\le$ 0.3 rad/s    \\ [1ex] 
			\hline
			$|u_i|,i=1,2,3$ & $\le$ 3 rad/$s^2$    \\ [1ex]
			\hline
			Initial Attitude $\mathbf{q}_{0}$ & [0.82,0.52,-0.12,-0.23]$^T$    \\ [1ex]
			\hline
			Terminal Attitude $\mathbf{q}_{f}$ & [0.275386,-0.51,-0.78,-0.24]$^T$    \\ [1ex]
			\hline
			Mandatory zone 1  & $\mathbf{x}_{M_1}=[-0.81, 0.55, -0.19]^T$, $\theta_{M_1} = 70^{\circ}$ \\ [1ex]
			\hline
			Forbidden zone 1  & $\mathbf{x}_{F_1}=[0, -1, 0]^T$, $\theta_{F_1} = 40^{\circ}$\\ [1ex]
			\hline
			Forbidden zone 2  & $\mathbf{x}_{F_2}=[0, 0.82, 0.57]^T$, $\theta_{F_2} = 30^{\circ}$ \\ [1ex]
			\hline
			Forbidden zone 3  & $\mathbf{x}_{F_3}=[-0.12, -0.14, -0.98]^T$, $\theta_{F_3} = 20^{\circ}$ \\ [1ex]
			\hline \hline
		\end{tabular}
		\label{table:Simulation parameters 2} 
	\end{table}
	
	Figure \ref{f:3D_case2} presents the trajectories of the telescope pointing vector and the antenna pointing vector in the constrained three-dimensional (3D) space. 
	The value of $r_k$ at each iteration is provided in Fig. \ref{f:lambda2_case2}, which indicates that $r_k$ converges to zero within a few iterations. Furthermore, we find comparative results solved via the commercial NLP solver, SNOPT~\cite{holmstrom1997tomlab}. Depending on the initial guess of the unknown variables, the NLP solver cannot guarantee a convergent solution. When a group of initial guess is randomly generated, the convergent solutions from NLP solver lead to two sets of objective value, $31.79$ and $76.92$. However, the objective value found from IRM is $23.72$, which reduces $25.39\%$ compared to the smallest objective value obtained from NLP solver. This simulation example verifies the feasibility of implementing IRM in a real optimal control problem. It takes the IRM $743.6$ secs to generate the optimal solution while the NLP solvers takes a average of $8.4$ secs for convergent cases.
	\begin{figure}[!h]
		\begin{center}
			\vspace{-0.1cm}\scalebox{0.85}{\includegraphics[width=95mm]{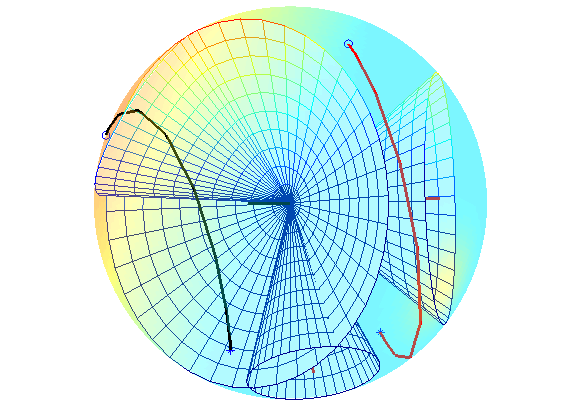}}
		\end{center}
		\vspace{-0.1cm}\caption{Trajectory of the telescope pointing vector (red) and the antenna point vector (black) in 3D space. The blue star and blue circle represent the initial and terminal orientations, respectively. The cone with black boresight vector represents the mandatory zone and the other three are the forbidden zones}\vspace{-0.1cm}\label{f:3D_case2}
	\end{figure}

	\begin{figure}[!h]
		\vspace{-5.0cm}
		\centering
		\hspace{-3.8cm}\scalebox{1.3}{\includegraphics[width=95mm]{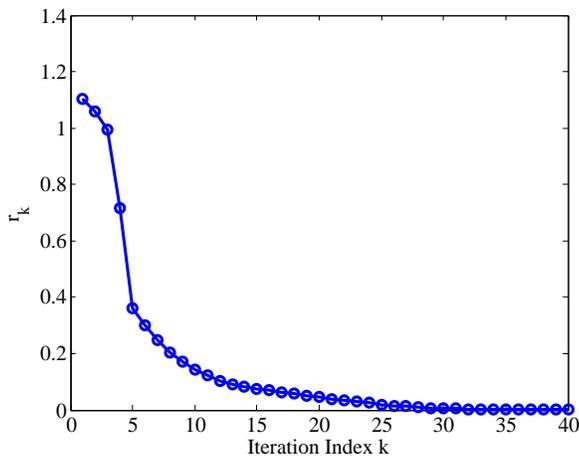}}
		\hspace{-3.8cm}
		\vspace{-5.2cm}\caption{Value of $r_k$ at each iteration}\vspace{-0.1cm}\label{f:lambda2_case2}
	\end{figure}

\section{CONCLUSIONS}
This paper proposes an Iterative Rank Minimization (IRM) method to solve nonconvex quadratically constrained quadratic programming (QCQP) problems. The subproblem at each iteration of IRM is formulated as a semidefinite programming (SDP) problem and an extended Uzawa algorithm, based on the augmented Lagrangian method, is developed to improve the scalability and computational efficiency in solving large scale SDPs at each iteration of IRM. Theoretical analysis on convergence of the proposed IRM method and the extended Uzawa algorithm is discussed. The effectiveness and improved performance of the proposed approach is verified by different types of simulation examples.


\bibliographystyle{IEEEtran}
\bibliographystyle{ieeeconf}
\bibliography{reflib}

\begin{thebibliography}{10}
\providecommand{\url}[1]{#1}
\csname url@samestyle\endcsname
\providecommand{\newblock}{\relax}
\providecommand{\bibinfo}[2]{#2}
\providecommand{\BIBentrySTDinterwordspacing}{\spaceskip=0pt\relax}
\providecommand{\BIBentryALTinterwordstretchfactor}{4}
\providecommand{\BIBentryALTinterwordspacing}{\spaceskip=\fontdimen2\font plus
\BIBentryALTinterwordstretchfactor\fontdimen3\font minus
  \fontdimen4\font\relax}
\providecommand{\BIBforeignlanguage}[2]{{%
\expandafter\ifx\csname l@#1\endcsname\relax
\typeout{** WARNING: IEEEtran.bst: No hyphenation pattern has been}%
\typeout{** loaded for the language `#1'. Using the pattern for}%
\typeout{** the default language instead.}%
\else
\language=\csname l@#1\endcsname
\fi
#2}}
\providecommand{\BIBdecl}{\relax}
\BIBdecl

\bibitem{burer2012representing}
S.~Burer and H.~Dong, ``Representing quadratically constrained quadratic
  programs as generalized copositive programs,'' \emph{Operations Research
  Letters}, vol.~40, no.~3, pp. 203--206, 2012.

\bibitem{d2003relaxations}
A.~d'Aspremont and S.~Boyd, ``Relaxations and randomized methods for nonconvex
  qcqps,'' \emph{EE392o Class Notes, Stanford University}, 2003.

\bibitem{4099492}
M.~Diehl, ``Formulation of closed-loop min-max mpc as a quadratically
  constrained quadratic program,'' \emph{Automatic Control, IEEE Transactions
  on}, vol.~52, no.~2, pp. 339--343, Feb 2007.

\bibitem{rutenberg1971product}
D.~P. Rutenberg and T.~L. Shaftel, ``Product design: Subassemblies for multiple
  markets,'' \emph{Management Science}, vol.~18, no. 4-part-i, pp. B--220,
  1971.

\bibitem{luo2010semidefinite}
Z.-q. Luo, W.-k. Ma, A.-C. So, Y.~Ye, and S.~Zhang, ``Semidefinite relaxation
  of quadratic optimization problems,'' \emph{Signal Processing Magazine,
  IEEE}, vol.~27, no.~3, pp. 20--34, 2010.

\bibitem{biswas2006semidefinite}
P.~Biswas, T.-C. Lian, T.-C. Wang, and Y.~Ye, ``Semidefinite programming based
  algorithms for sensor network localization,'' \emph{ACM Transactions on
  Sensor Networks (TOSN)}, vol.~2, no.~2, pp. 188--220, 2006.

\bibitem{1664986}
R.~Jabr, ``Radial distribution load flow using conic programming,'' \emph{Power
  Systems, IEEE Transactions on}, vol.~21, no.~3, pp. 1458--1459, Aug 2006.

\bibitem{5971792}
J.~Lavaei and S.~Low, ``Zero duality gap in optimal power flow problem,''
  \emph{Power Systems, IEEE Transactions on}, vol.~27, no.~1, pp. 92--107, Feb
  2012.

\bibitem{Acikmese2011341}
B.~Acikmese and L.~Blackmore, ``Lossless convexification of a class of optimal
  control problems with non-convex control constraints,'' \emph{Automatica},
  vol.~47, no.~2, pp. 341--347, 2011.

\bibitem{5738669}
S.~Narasimhan and R.~Rengaswamy, ``Plant friendly input design: Convex
  relaxation and quality,'' \emph{Automatic Control, IEEE Transactions on},
  vol.~56, no.~6, pp. 1467--1472, June 2011.

\bibitem{6190765}
E.~Elhamifar and R.~Vidal, ``Block-sparse recovery via convex optimization,''
  \emph{Signal Processing, IEEE Transactions on}, vol.~60, no.~8, pp.
  4094--4107, Aug 2012.

\bibitem{al1995relaxation}
F.~A. Al-Khayyal, C.~Larsen, and T.~Van~Voorhis, ``A relaxation method for
  nonconvex quadratically constrained quadratic programs,'' \emph{Journal of
  Global Optimization}, vol.~6, no.~3, pp. 215--230, 1995.

\bibitem{qualizza2012linear}
A.~Qualizza, P.~Belotti, and F.~Margot, \emph{Linear programming relaxations of
  quadratically constrained quadratic programs}.\hskip 1em plus 0.5em minus
  0.4em\relax Springer, 2012.

\bibitem{Bao2011}
X.~Bao, N.~Sahinidis, and M.~Tawarmalani, ``Semidefinite relaxations for
  quadratically constrained quadratic programming: A review and comparisons,''
  \emph{Mathematical Programming}, vol. 129, pp. 129--157, 2011.

\bibitem{linderoth2005simplicial}
J.~Linderoth, ``A simplicial branch-and-bound algorithm for solving
  quadratically constrained quadratic programs,'' \emph{Mathematical
  Programming}, vol. 103, no.~2, pp. 251--282, 2005.

\bibitem{androulakis1995alphabb}
I.~Androulakis, C.~Maranas, and C.~Floudas, ``$\alpha$bb: A global optimization
  method for general constrained nonconvex problems,'' \emph{Journal of Global
  Optimization}, vol.~7, no.~4, pp. 337--363, 1995.

\bibitem{dai2012optimal}
R.~Dai, U.~Lee, S.~Hosseini, and M.~Mesbahi, ``Optimal path planning for
  solar-powered uavs based on unit quaternions,'' in \emph{Decision and
  Control, 2012 IEEE 51st Annual Conference on}, pp. 3104--3109.

\bibitem{sojoudi2014exactness}
S.~Sojoudi and J.~Lavaei, ``Exactness of semidefinite relaxations for nonlinear
  optimization problems with underlying graph structure,'' \emph{SIAM Journal
  on Optimization}, vol.~24, no.~4, pp. 1746--1778, 2014.

\bibitem{lasserre2001global}
J.~B. Lasserre, ``Global optimization with polynomials and the problem of
  moments,'' \emph{SIAM Journal on Optimization}, vol.~11, no.~3, pp. 796--817,
  2001.

\bibitem{dai_acc2014}
R.~Dai, ``Three-dimensional aircraft path planning based on nonconvex quadratic
  optimization,'' in \emph{American Control Conference}, 2014, pp. 4561--4566.

\bibitem{7172240}
C.~Sun and R.~Dai, ``Identification of network topology via quadratic
  optimization,'' in \emph{American Control Conference}, July 2015.

\bibitem{conn1991globally}
A.~R. Conn, N.~I. Gould, and P.~Toint, ``A globally convergent augmented
  lagrangian algorithm for optimization with general constraints and simple
  bounds,'' \emph{SIAM Journal on Numerical Analysis}, vol.~28, no.~2, pp.
  545--572, 1991.

\bibitem{fortin2000augmented}
M.~Fortin and R.~Glowinski, \emph{Augmented Lagrangian methods: applications to
  the numerical solution of boundary-value problems}.\hskip 1em plus 0.5em
  minus 0.4em\relax Elsevier, 2000.

\bibitem{boyed1996}
L.~Vandenberghe and S.~Boyd, ``Semidefinite programming,'' \emph{SIAM Review},
  vol.~38, pp. 49--95, 1996.

\bibitem{fazel2003log}
M.~Fazel, H.~Hindi, and S.~P. Boyd, ``Log-det heuristic for matrix rank
  minimization with applications to hankel and euclidean distance matrices,''
  in \emph{American Control Conference}, vol.~3, 2003, pp. 2156--2162.

\bibitem{mesbahi1997rank}
M.~Mesbahi and G.~P. Papavassilopoulos, ``On the rank minimization problem over
  a positive semidefinite linear matrix inequality,'' \emph{Automatic Control,
  IEEE Transactions on}, vol.~42, no.~2, pp. 239--243, 1997.

\bibitem{shafi2011graph}
S.~Y. Shafi, M.~Arcak, and L.~El~Ghaoui, ``Graph weight design for laplacian
  eigenvalue constraints with multi-agent systems applications,'' in
  \emph{Decision and Control and European Control Conference, 50th IEEE
  Conference on}, 2011, pp. 5541--5546.

\bibitem{bertsekas1999nonlinear}
D.~P. Bertsekas, \emph{Nonlinear programming}.\hskip 1em plus 0.5em minus
  0.4em\relax Athena scientific, 1999.

\bibitem{sturm1999using}
J.~F. Sturm, ``Using sedumi 1.02, a matlab toolbox for optimization over
  symmetric cones,'' \emph{Optimization methods and software}, vol.~11, no.
  1-4, pp. 625--653, 1999.

\bibitem{toh1999sdpt3}
K.-C. Toh, M.~J. Todd, and R.~H. T{\"u}t{\"u}nc{\"u}, ``Sdpt3-a matlab software
  package for semidefinite programming, version 1.3,'' \emph{Optimization
  methods and software}, vol.~11, no. 1-4, pp. 545--581, 1999.

\bibitem{elman1994inexact}
H.~C. Elman and G.~H. Golub, ``Inexact and preconditioned uzawa algorithms for
  saddle point problems,'' \emph{SIAM Journal on Numerical Analysis}, vol.~31,
  no.~6, pp. 1645--1661, 1994.

\bibitem{cai2010singular}
J.-F. Cai, E.~J. Cand{\`e}s, and Z.~Shen, ``A singular value thresholding
  algorithm for matrix completion,'' \emph{SIAM Journal on Optimization},
  vol.~20, no.~4, pp. 1956--1982, 2010.

\bibitem{cheng1984gradient}
Y.~Cheng, ``On the gradient-projection method for solving the nonsymmetric
  linear complementarity problem,'' \emph{Journal of Optimization Theory and
  Applications}, vol.~43, no.~4, pp. 527--541, 1984.

\bibitem{bertsekas2003convex}
D.~Bertsekas and A.~Nedic, \emph{Convex analysis and optimization
  (conservative)}.\hskip 1em plus 0.5em minus 0.4em\relax Athena Scientific,
  2003.

\bibitem{holmstrom1997tomlab}
K.~Holmstr{\"o}m, ``Tomlab--an environment for solving optimization problems in
  matlab,'' in \emph{Proceedings for the Nordic Matlab Conference'97}, 1997.

\bibitem{Yoonsoo2010}
Y.~Kim, M.~Mesbahi, G.~Singh, and F.~Y. Hadaegh, ``On the convex
  parameterization of constrained spacecraft reorientation,'' \emph{Aerospace
  and Electronic Systems, IEEE Transactions on}, vol.~46, no.~3, pp. 1097
  --1109, july 2010.

\end{thebibliography}
\end{document}